\newtheorem{theorem}{Theorem}[section]
\newtheorem{proposition}[theorem]{Proposition}
\newtheorem{corollary}[theorem]{Corollary}
\newtheorem{remark}[theorem]{Remark}
\theoremstyle{remark}
\newcommand{\minus}{\scalebox{0.60}[0.75]{$-$}} % custom minus sign
\title{Elementary Proof of a Theorem of Hawkes, Isaacs and \"Ozaydin}
\date{}
\author{Matth\'e van der Lee}
\begin{document}

\maketitle

\begin{abstract}
We present an elementary proof of the theorem of Hawkes, Isaacs and \"Ozaydin, which states that $\Sigma\,\mu_{G}(H,K)\equiv 0$ mod $d$, where $\mu_{G}$ denotes the M\"obius function for the subgroup lattice of a finite group $G$, $H$ ranges over the conjugates of a given subgroup $F$ of $G$ with $[G:F]$ divisible by $d$, and $K$ over the supergroups of the $H$ for which $[K:H]$ divides $d$. We apply the theorem to obtain a result on the number of solutions of $|\langle H,g\rangle|\mid n$, for said $H$ and a natural number $n$. The present version of the article includes an additional result on a quantity studied by K.S. Brown.
\end{abstract}

\begin{small}
{\bf Keywords:} M\"obius function, arithmetic functions, subgroup lattice

{\bf 2010 MSC:} 05E15, 11A25, 20D30
\end{small}

\section{Introduction}

The purpose of this note is to present a simple proof for the theorem of Hawkes, Isaacs and \"Ozaydin, an important tool in the area of counting problems in finite groups.

We refer to \cite{HIO}, where the result has appeared as Theorem 5.1, for background information regarding the subject.

The exposition uses minimal group theory. The ingredients for the proof are Burnside's Lemma, the incidence algebra of a finite partially ordered set (of which we use only the basic properties), and a feature of arithmetic functions: Corollary \ref{cor:equiv2}.

Next, we derive a result on the number of solutions $g$ of $|\langle H,g\rangle|\mid n$, for a finite group $G$, a subgroup $H\leq G$, and $n\mid|G|$ (Theorem \ref{th:Frobenius}), and some results on a quantity studied by K.S. Brown (Propositions \ref{prop:phiinv} and \ref{prop:invphi}).

\section{A group action}\label{sec:action}

For positive integers $m$ and $d$, we define the number $\boldsymbol{b_{m}(d)}$ as the binomial coefficient
\[
b_{m}(d) = {dm\minus 1 \choose d\minus 1}.
\]
This function will play a pivotal role.

Let $t,m$ be positive integers, and $G$ a finite group of order $t\cdot m$. $G$ acts on the set $X = \{T\subseteq G\mid|T| = t\}$ of its subsets of cardinality $t$ in the natural way
\[
g\cdot T := gT = \{gx\mid x\in T\}.
\]
This is in fact the group action used in Wielandt's proof of Sylow's theorem, see \cite{HUP}, §1.7.

If $g\in G$ fixes an element $T\in X$, then $HT=T$ where $H=\langle g\rangle$ is the subgroup generated by $g$. So $T$ must be a union of right cosets of $H$ in $G$. As $|H|$ equals the order $\boldsymbol{o(g)}$ of $g$ in $G$, this means $o(g)$ divides $|T|=t$.
Writing $t=o(g)\cdot d$, we find that $T$ is the union of $d$ of the $d\cdot m$ right cosets of $H$. Conversely, any such union is an element of $X$ and is fixed by $g$. It follows that the number of fixed points in $X$ of an element $g\in G$ of order $t/d$ equals
\[
{dm\choose d}={dm\minus 1 \choose d\minus 1}\cdot m=b_{m}(d)\cdot m.
\]

And, clearly, if $o(g)$ does not divide $t$, $g$ cannot have any fixed points in $X$.

By Burnside's Lemma, the number of orbits of $G$ in $X$ is equal to the average number of fixed points of the elements of $G$. Denoting the number of elements of order $n$ in $G$ by $\boldsymbol{\chi(n)}=\chi_{G}(n)$, the number of orbits is therefore equal to
\[
(\Sigma_{d\mid t}\,b_{m}(d)\cdot m\cdot \chi(t/d))\,/\,(t\cdot m).
\]
Because this must be an integer, we have the following result.

\begin{proposition}\label{prop:chi}
If $m$ and $t$ are positive integers and $G$ is a group of order $tm$, one has
\begin{flalign*}
&&\Sigma_{d\mid t}\,b_{m}(d)\,\chi_{G}(t/d)\equiv \,0 \text{ mod } t &&\qed
\end{flalign*}
\end{proposition}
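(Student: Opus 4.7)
The proof proposal is essentially to harvest the computation already carried out in Section \ref{sec:action}. Burnside's Lemma asserts that the number of $G$-orbits on $X$ equals
$$\frac{1}{|G|}\sum_{g\in G}|\{T\in X:gT=T\}|,$$
which is a nonnegative integer. The strategy is to evaluate the numerator by stratifying $g\in G$ according to $o(g)$, and then to conclude by integrality.

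First, I would invoke the fixed-point analysis from the preceding paragraphs: if $o(g)\nmid t$ then $g$ has no fixed points on $X$, whereas if $o(g)=t/d$ for some divisor $d$ of $t$ then $g$ fixes exactly $b_m(d)\cdot m$ elements of $X$. Grouping together the $\chi_G(t/d)$ elements of order $t/d$, the total fixed-point count becomes
$$\sum_{g\in G}|\{T\in X:gT=T\}|=m\cdot\sum_{d\mid t}b_m(d)\,\chi_G(t/d).$$

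Second, dividing this by $|G|=tm$ must yield an integer, so $tm$ divides $m\cdot\sum_{d\mid t}b_m(d)\,\chi_G(t/d)$. Cancelling the common factor $m$ gives the stated congruence modulo $t$. There is no real obstacle here: the fixed-point enumeration and Burnside's Lemma do all the work, and the factor $m$ pulls cleanly out of the sum, which is the one place where a slip could occur. Indeed, had the fixed-point count been $b_m(d)$ instead of $b_m(d)\cdot m$, one would only recover the weaker congruence modulo $t/\gcd(t,m)$; it is therefore worth recording explicitly why the whole factor $m$ comes out, namely because $T$ is determined by \emph{which} $d$ of the $dm$ right cosets of $\langle g\rangle$ one selects, and the coset partition itself already carries an intrinsic factor of $m$.
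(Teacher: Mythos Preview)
Your proof is correct and is essentially identical to the paper's own argument: both count fixed points of $g$ on $X$ via the coset decomposition, sum over orders using $\chi_G$, and invoke integrality of the orbit count from Burnside's Lemma to obtain the congruence. The only minor quibble is your closing explanation of the factor $m$: it arises from the binomial identity $\binom{dm}{d}=m\binom{dm-1}{d-1}=m\,b_m(d)$ rather than from any ``intrinsic factor'' in the coset partition, but this does not affect the argument.
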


\section{Arithmetic functions modulo $n$}\label{sec:arith}

Let $n$ be a fixed positive integer, and consider the set $\boldsymbol{Ar_{n}} = \{f\mid f:\mathbb{N}\to\mathbb{Z}/n\}$ of functions from $\mathbb{N}$ to $\mathbb{Z}/n$. We can think of the standard arithmetic functions such as the Euler $\varphi$ and M\"obius $\mu$ functions as elements of this set, taking their values modulo $n$.

The set $Ar_{n}$ is a commutative ring under the operations of point-wise addition and convolution product multiplication
\begin{align*}
f+h:a&\mapsto f(a)+h(a)\\
f\,*\,h:a&\mapsto\Sigma_{d\mid a}\,f(d)\,h(a/d)
\end{align*}
Unity is the function $\boldsymbol{\varepsilon}$, given by $\varepsilon(1)=1$ and $\varepsilon(a)=0$ for $a>1$. The group of units of $Ar_{n}$ is:
\begin{align*}
Ar_{n}^{*} = \{f\in Ar_{n}\mid f(1)\in (\mathbb{Z}/n)^{*}\}.
\end{align*}
Indeed, if $f*h=\varepsilon$, then $f(1)\,h(1)\equiv 1$ mod $n$ and $\forall_{a\in \mathbb{N},\,a>1}\,\Sigma_{d\mid a}\,f(d)\,h(a/d)\equiv 0$ mod $n$. So $h(1)$ must be the inverse of $f(1)$ in $(\mathbb{Z}/n)^{*}$, and the value of $h(a)$ for $a>1$ can recursively be determined in $\mathbb{Z}/n$ from the second congruence, in which it has coefficient $f(1)$.

By the well-known formula $\Sigma_{d\mid a}\,\mu(d)=\delta_{1,a}=\varepsilon(a)$, the inverse $\mu^{\minus 1}$ of the ordinary M\"obius $\mu$ function in the ring $Ar_{n}$ is the function $\boldsymbol{E}$ given by $E(a)=1$ for all $a\in \mathbb{N}$. Thus,
\begin{equation}\label{eq:Emu}
\mu=E^{\minus 1}
\end{equation}
We now discuss the set $\boldsymbol{R_{n}}=\{f\in Ar_{n}\mid\forall_{t\mid n}\,f(t)\equiv 0$ mod $t\}$ of the arithmetic functions mod $n$ that are, to give them a name, \textit{special}.

\begin{proposition}\label{prop:subring}
$R_{n}$ is a unitary subring of $Ar_{n}$, and one has $R_{n}^{*}=R_{n}\cap Ar_{n}^{*}$. That is, if $f\in R_{n}$ is a unit in $Ar_{n}$, it is a unit in $R_{n}$.
\end{proposition}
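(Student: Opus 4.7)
The plan is to verify the subring axioms directly, then deduce the unit characterization by a divisor induction that mirrors the recursive construction of $h = f^{-1}$ already described in the excerpt.

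For the subring part, closure under addition and negation is pointwise and immediate, and $\varepsilon \in R_{n}$ is trivial: $\varepsilon(1) = 1 \equiv 0 \text{ mod } 1$, while $\varepsilon(t) = 0$ for $t > 1$. The substantive step is closure under convolution. Given $f, h \in R_{n}$ and $t \mid n$, I would expand
\begin{equation*}
(f * h)(t) = \sum_{d \mid t} f(d)\, h(t/d)
\end{equation*}
and note that each $d \mid t$ and $t/d \mid t$ also divide $n$, so that $f(d) \equiv 0 \text{ mod } d$ and $h(t/d) \equiv 0 \text{ mod } t/d$. Hence every summand is divisible by $d\cdot(t/d) = t$, and therefore so is the sum.

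For the unit claim, the inclusion $R_{n}^{*} \subseteq R_{n} \cap Ar_{n}^{*}$ is automatic because any unit of a subring is a unit of the ambient ring. For the converse, given $f \in R_{n}$ with $f(1) \in (\mathbb{Z}/n)^{*}$, let $h \in Ar_{n}$ be its inverse; I would show by induction on $t$ over the divisors of $n$ that $h(t) \equiv 0 \text{ mod } t$. The base case $t = 1$ is vacuous. For $t > 1$ with $t \mid n$, the recursion $(f*h)(t) = 0$ gives
\begin{equation*}
f(1)\, h(t) = -\sum_{\substack{d \mid t \\ d > 1}} f(d)\, h(t/d).
\end{equation*}
For each term on the right, $d \mid n$ forces $f(d) \equiv 0 \text{ mod } d$, and the induction hypothesis applied to the proper divisor $t/d$ gives $h(t/d) \equiv 0 \text{ mod } t/d$; hence the right-hand side is $\equiv 0 \text{ mod } t$. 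Since $f(1)$ is invertible modulo $n$ and $t \mid n$, it is also invertible modulo $t$, so $h(t) \equiv 0 \text{ mod } t$, completing the induction.

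No step looks genuinely difficult; the only care needed is in reading "$x \equiv 0 \text{ mod } t$" for an element $x \in \mathbb{Z}/n$, which is well-defined exactly because $t \mid n$. The mild conceptual point is that the divisibility of each convolution term by the full index $t$ comes from the \emph{factorization} $t = d \cdot (t/d)$ together with $R_{n}$-membership on \emph{both} factors, which is why the class $R_{n}$ is so well-adapted to the convolution product.
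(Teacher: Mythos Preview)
Your proof is correct and follows essentially the same approach as the paper's: both arguments verify closure under convolution via the factorization $t = d\cdot(t/d)$, and both deduce $f^{-1}(t)\equiv 0\pmod{t}$ by induction on divisors from the recursion $f(1)\,f^{-1}(t) = -\sum_{d\mid t,\,d>1} f(d)\,f^{-1}(t/d)$. Your exposition is a bit more explicit (noting that $t/d$ is a proper divisor of $n$ and that invertibility modulo $n$ descends to invertibility modulo $t$), but the underlying argument is identical.
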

\begin{proof}
$R_{n}$ is closed under addition, and $\varepsilon\in R_{n}$. If $f$ and $h$ are special mod $n$ and $t$ is a divisor of $n$, each term in the sum $(f*h)(t)=\Sigma_{d\mid t}\,f(d)\,h(t/d)$ is divisible by $d\cdot (t/d)=t$, and $f*h$ is again special.

Finally, if $f\in R_{n}\cap Ar_{n}^{*}$, let $t\mid n$ with $t>1$. Then $f(1)$ is relatively prime to $n$, hence to $t$. Furthermore, $f(1)\,f^{\minus 1}(t)\equiv\minus \Sigma_{1<d\mid t}\,f(d)\,f^{\minus 1}(t/d)$ mod $n$, so certainly modulo $t$. By induction, all terms on the right-hand side can be assumed to be multiples of $d\cdot (t/d)=t$. Hence $f(1)\,f^{\minus 1}(t)$ is a multiple of $t$, and therefore so is $f^{\minus 1}(t).$
\end{proof}
\begin{corollary}\label{cor:equiv1}
Let $f,h\in Ar_{n}$, and assume $f*E\in R_{n}$. Then
\begin{enumerate}[label=\emph{(\alph*)}]
\item $h*\mu\in R_{n}\Rightarrow h*f\in R_{n}$
\end{enumerate}
If, moreover, $f\in Ar_{n}^{*}$, the following equivalences hold
\begin{enumerate}[resume*]
\item $h*\mu\in R_{n}\Leftrightarrow h*f\in R_{n}$
\item $h*E\in R_{n}\Leftrightarrow h*f^{\minus 1}\in R_{n}$
\end{enumerate}
\end{corollary}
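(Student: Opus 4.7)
The plan is to use Proposition \ref{prop:subring} together with the identity $\mu * E = \varepsilon$ from (\ref{eq:Emu}). The observation driving everything is that, in the commutative ring $Ar_n$, this identity immediately yields the factorization
\[
h * f \;=\; (h * \mu) * (f * E),
\]
valid for any $h, f \in Ar_n$. Under the hypotheses of (a), both factors on the right lie in $R_n$, and Proposition \ref{prop:subring} then gives $h * f \in R_n$. This also settles the forward implication of (b), which is a special case of (a).

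For the remaining implications I would first upgrade the standing hypothesis $f * E \in R_n$: when $f \in Ar_n^*$, the value $(f * E)(1) = f(1)$ is a unit in $\mathbb{Z}/n$, so $f * E$ lies in $R_n \cap Ar_n^* = R_n^*$ by Proposition \ref{prop:subring}. Hence $(f * E)^{\minus 1} = \mu * f^{\minus 1}$ is also in $R_n$. The remaining implications are then obtained by multiplying the respective hypothesis by a suitable element of $R_n$, using the factorizations
\[
h * \mu \;=\; (h * f) * (f * E)^{\minus 1}
\]
for the converse of (b), and
\[
h * E \;=\; (h * f^{\minus 1}) * (f * E), \qquad h * f^{\minus 1} \;=\; (h * E) * (f * E)^{\minus 1}
\]
for (c). Each identity is verified by a direct convolution calculation using $\mu * E = \varepsilon$.

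I expect no real obstacle: once Proposition \ref{prop:subring} is in hand, the corollary reduces to algebraic bookkeeping in $Ar_n$. The only point worth flagging is that the reverse implications genuinely need the unit assumption $f \in Ar_n^*$, which feeds into the nontrivial half of Proposition \ref{prop:subring} to guarantee $(f * E)^{\minus 1} \in R_n$; without it, one has only the single implication (a).
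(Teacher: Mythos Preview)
Your proof is correct and follows essentially the same route as the paper: factor via $h*f=(h*\mu)*(f*E)$ using $\mu*E=\varepsilon$, invoke Proposition~\ref{prop:subring} to see that $R_n$ is closed under products, and for the reverse implications use that $f*E\in R_n^{*}$ so that $(f*E)^{\minus 1}=\mu*f^{\minus 1}\in R_n$. The only cosmetic difference is that the paper handles (c) in one line by observing that multiplication by the $R_n$-unit $f*E$ preserves membership in $R_n$ in both directions, whereas you spell out the two factorizations separately.
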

\begin{proof}
If $h*\mu\in R_{n}$, then since $f*E$ is also in $R_{n}$, so is their product $h*\mu*f*E$. By equation \eqref{eq:Emu}, that is just $h*f$, giving (a).

Now if $f$ is a unit, so is $f*E$. As this function is in $R_{n}$, by Proposition \ref{prop:subring} its inverse $f^{\minus 1}*\mu$ is also in $R_{n}$. So if $h*f\in R_{n}$, the product $h*f*f^{\minus 1}*\mu=h*\mu$ is in $R_{n}$ too, establishing (b).

For (c), note that $h*E\in R_{n}\Leftrightarrow h*(f^{\minus 1}*f)*E\in R_{n}\Leftrightarrow h*f^{\minus 1}\in R_{n}$, the latter equivalence because of $f*E\in R_{n}^{*}$.
\end{proof}

Note that the Corollary applies in particular to $f=\varphi$. Indeed, as $\Sigma_{d\mid a}\,\varphi(d)=a$, one has $\varphi*E=I\in R_{n}$, where $\boldsymbol{I}$ denotes the \textit{identity function}, $I(a)=a$ (taken modulo $n$). And $\varphi\in Ar_{n}^{*}$ since $\varphi(1)=1$.

Viewing the binomial coefficient function $b_{m}:d\mapsto {dm\minus 1 \choose d\minus 1}$ introduced in Section \ref{sec:action} as an element of $Ar_{n}$, we now have

\begin{proposition}\label{prop:equivalence}
For $m\in\mathbb{N}$ and $h\in Ar_{n}$, the following are equivalent
\begin{enumerate}[label=\emph{(\alph*)}]
\item $h*E\in R_{n}$
\item $h*b_{m}\in R_{n}$
\end{enumerate}
\end{proposition}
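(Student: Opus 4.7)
The plan is to apply Corollary \ref{cor:equiv1}(c) with $f = b_{m}^{\minus 1}$, so that its conclusion $h*E\in R_{n} \Leftrightarrow h*f^{\minus 1}\in R_{n}$ becomes exactly the desired equivalence (a) $\Leftrightarrow$ (b). Since $b_{m}(1) = {m\minus 1\choose 0} = 1$, the function $b_{m}$ lies in $Ar_{n}^{*}$ and therefore admits an inverse. What remains is to verify the hypothesis $b_{m}^{\minus 1}*E\in R_{n}$.

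The first step is to establish that $b_{m}*\mu\in R_{n}$. For each divisor $t$ of $n$, take $G$ to be the cyclic group of order $tm$, for which $\chi_{G}(s)=\varphi(s)$ for every $s\mid tm$. Proposition \ref{prop:chi} then specializes to $\Sigma_{d\mid t}\,b_{m}(d)\,\varphi(t/d)\equiv 0 \text{ mod } t$, i.e.\ $b_{m}*\varphi\in R_{n}$. I next apply Corollary \ref{cor:equiv1}(b) with $f=\varphi$, whose hypotheses $\varphi*E=I\in R_{n}$ and $\varphi\in Ar_{n}^{*}$ are recorded in the paragraph following that corollary; this converts $b_{m}*\varphi\in R_{n}$ into $b_{m}*\mu\in R_{n}$.

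Since $(b_{m}*\mu)(1)=1$, the function $b_{m}*\mu$ is a unit in $Ar_{n}$, hence by Proposition \ref{prop:subring} a unit in $R_{n}$. Its inverse equals $b_{m}^{\minus 1}*\mu^{\minus 1}=b_{m}^{\minus 1}*E$ by \eqref{eq:Emu}, so $b_{m}^{\minus 1}*E\in R_{n}$. This is precisely the hypothesis required for Corollary \ref{cor:equiv1}(c) with $f=b_{m}^{\minus 1}$, and the equivalence follows.

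I do not anticipate a real obstacle: the single non-routine observation is that specializing Proposition \ref{prop:chi} to cyclic groups yields a statement about $b_{m}*\varphi$, which then must be passed through the $\varphi$-instance of Corollary \ref{cor:equiv1} to reach $b_{m}*\mu$. After that, Proposition \ref{prop:subring} supplies the remaining inversion for free, and the choice $f=b_{m}^{\minus 1}$ in Corollary \ref{cor:equiv1}(c) was designed so that $f^{\minus 1}=b_{m}$ makes the conclusion match the claim verbatim.
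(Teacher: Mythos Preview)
Your argument is correct and follows essentially the same route as the paper's own proof: specialize Proposition~\ref{prop:chi} to cyclic groups to obtain $b_{m}*\varphi\in R_{n}$, pass to $b_{m}*\mu\in R_{n}$ via Corollary~\ref{cor:equiv1}(b) with $f=\varphi$, invert to get $b_{m}^{\minus 1}*E\in R_{n}$, and finish with Corollary~\ref{cor:equiv1}(c) for $f=b_{m}^{\minus 1}$. Your write-up is in fact slightly more explicit than the paper's at the inversion step, spelling out that $(b_{m}*\mu)(1)=1$ and invoking Proposition~\ref{prop:subring} directly where the paper simply says ``taking inverses.''
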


\begin{proof} Let $t$ be a divisor of $n$. We apply Proposition \ref{prop:chi} to the cyclic group $G=\mathbb{Z}/tm$. It has $\chi_{G}(d)=\varphi(d)$ for every divisor $d$ of $tm$, and hence we obtain $\Sigma_{d\mid t}\,b_{m}(d)\,\varphi(t/d)\equiv 0 \,(t)$. Writing $b$ for $b_{m}$, it follows that $b*\varphi\in R_{n}$. By (b) of Corollary \ref{cor:equiv1}, applied to $f=\varphi$ and $h=b$, we find $b*\mu\in R_{n}$.

As $b(1)=1$, the function $b$ is a unit in $Ar_{n}$. Taking inverses, we get $b^{\minus 1}*E\in R_{n}$. Part (c) of the Corollary, for $f:=b^{\minus 1}$, then gives the desired equivalence $h*E\in R_{n}\Leftrightarrow h*b\in R_{n}$ for $h\in Ar_{n}$.
\end{proof}

A slightly stronger statement is:

\begin{corollary}\label{cor:equiv2}
For $h\in Ar_{n}$, the following are equivalent
\begin{enumerate}[label=\emph{(\roman*)}]
\item $\forall_{t\mid n}\,\Sigma_{d\mid t}\,h(d)\equiv \,0\:(t)$
\item $\forall_{t\mid n}\,\exists_{m\in\mathbb{N}}\,\Sigma_{d\mid t}\,h(d)\,b_{m}(t/d)\equiv \,0\:(t)$
\end{enumerate}
\end{corollary}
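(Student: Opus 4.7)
The direction (i) $\Rightarrow$ (ii) is immediate: since $b_{1}(d) = {d\minus 1 \choose d\minus 1} = 1$ for every $d$, the function $b_{1}$ coincides with $E$, so choosing $m = 1$ for every $t$ in (ii) reduces the statement literally to (i).

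For the converse, my plan is strong induction on $t$ over the divisors of $n$. The key point is that although (ii) allows $m$ to depend on $t$, once we isolate a single $t$ we are free to fix the corresponding $m$ and then invoke Proposition \ref{prop:equivalence} twice: first at each smaller divisor (to translate $E$-congruences into $b_{m}$-congruences), and then at $t$ itself (to translate them back). Concretely, for the inductive step fix $t \mid n$ with $t > 1$ and assume (i) holds for every proper divisor of $t$. Let $m$ be the integer supplied by (ii) for this $t$. For any proper divisor $s$ of $t$, the induction hypothesis gives $h * E \in R_{s}$ when $h$ is viewed in $Ar_{s}$, and Proposition \ref{prop:equivalence}, applied with $n$ replaced by $s$ and with our fixed $m$, yields $\Sigma_{d \mid s}\,h(d)\,b_{m}(s/d) \equiv 0$ mod $s$. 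Combined with the $s = t$ case furnished by (ii), this says exactly that $h * b_{m} \in R_{t}$ in $Ar_{t}$. A final appeal to Proposition \ref{prop:equivalence}, this time with $n$ replaced by $t$, converts this back to $h * E \in R_{t}$, and reading off the $s = t$ component delivers $\Sigma_{d \mid t}\,h(d) \equiv 0$ mod $t$, closing the induction.

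The main obstacle is precisely the $t$-dependence of $m$ in (ii), which prevents a single global application of Proposition \ref{prop:equivalence}. The induction circumvents it by pinning down one value of $m$ at a time, after which the Proposition may be invoked freely inside the smaller ring $Ar_{t}$.
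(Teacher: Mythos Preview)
Your proof is correct and follows essentially the same inductive strategy as the paper for the nontrivial direction (ii) $\Rightarrow$ (i): assume (i) for proper divisors of $t$, use Proposition~\ref{prop:equivalence} to convert these into $b_m$-congruences for the particular $m$ supplied by (ii), assemble $h*b_m \in R_t$, and apply the Proposition once more to recover $h*E \in R_t$. For the easy direction you invoke the concrete identity $b_1 = E$, whereas the paper simply cites Proposition~\ref{prop:equivalence}(b); both are immediate.
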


\begin{proof}
(i) just paraphrases (a) of the Proposition, and (ii) is a trivial consequence of (b).
To see that (ii) implies (i), take a divisor $t$ of $n$, and suppose one already knows that $\forall_{k\mid t,\,k<t}\,\Sigma_{d\mid k}\,h(d)\equiv 0\,(k)$. By the Proposition, $h*b_{m}\in R_{k}$ for all $m\in\mathbb{N}$ and all $k\mid t$ with $k<t$.
Using (ii), pick an $m$ such that $\Sigma_{d\mid t}\,h(d)\,b_{m}(t/d)\equiv 0\,(t)$. Then clearly $h*b_{m}\in R_{t}$ for this particular $m$. Applying Proposition \ref{prop:equivalence} again, one finds that $h*E\in R_{t}$, and so (i) holds for $t$ as well.
\end{proof}

\section{The incidence algebra of a finite poset}\label{sec:incalg}

Let $\langle P,\leq\rangle$ be a finite poset (partially ordered set), and $A$ a commutative ring. The \textit{incidence algebra} of $P$ over $A$ is the set of functions in two variables on $P$ with values in $A$, which can only assume non-zero values when the arguments are comparable:
\begin{align*}
\boldsymbol{W_{P,A}} = \{f:P\times P\to A\mid\forall_{x,\,y\,\in\,P}\,(x\nleq y \Rightarrow f(x,y)=0)\}.
\end{align*}

The set $W_{P,A}$ is a, generally non-commutative unitary ring under point-wise addition and \textit{convolution product} multiplication
\begin{align*}
(f+h)(x,y)\, &=\,f(x,y)+h(x,y)\\
(f*h)(x,y)\, &=\,\Sigma_{x\leq z\leq y}\,f(x,z)\,h(z,y)
\end{align*}

Unity is Kronecker's delta function, $\boldsymbol{\delta}(x,y)=1$ if $x=y$ and $\delta(x,y)=0$ otherwise.

$W_{P,A}$ is an $A$-module, with $a\in A$ acting by $(a\cdot f)(x,y)=a\cdot f(x,y)$. And $A\to W_{P,A}, \,a\mapsto a\cdot\delta,$ is a ring homomorphism making it an $A$-algebra. Its group of units is given by:
\begin{align*}
W_{P,A}^{*}=\{f\in W_{P,A}\mid\forall_{x\in P}\,f(x,x)\in A^{*}\}.
\end{align*}

For if $f(x,x)\in A^{*}$ for all $x\in P$, we may compute the values $h(x,y)\in A$ of the right inverse $h$ of $f$ (the $h$ with $f*h=\delta$) for a given $y\in P$ inductively for the $x\leq y$ by means of
\begin{align*}
h(y,y)&=f(y,y)^{\minus1}\\
h(x,y)&=\minus f(x,x)^{\minus1}\cdot\Sigma_{x<z\leq y}\,f(x,z)\,h(z,y)
\end{align*}

(This works because $P$ is finite.) Similarly, the values of the left inverse $g$ of $f$ are found, given $x$, using
\begin{align*}
g(x,x)&=f(x,x)^{\minus1}\\
g(x,y)&=\minus f(y,y)^{\minus1}\cdot\Sigma_{x\leq z<y}\,g(x,z)\,f(z,y)
\end{align*}

By associativity, $g=g*\delta=g*f*h=\delta*h=h$ is the unique two-sided inverse of $f$.

Key elements of $W_{P,A}$ are the following functions $\boldsymbol{\zeta}$, $\boldsymbol{\eta}$ and $\boldsymbol{\mu}$:
\begin{align*}
&\zeta(x,y) = \begin{cases} \,1 &\mbox{if } x\leq y\\ 
\,0 & \mbox{otherwise} \end{cases} &&\mbox{the \textit{zeta function} of } P\\
&\eta(x,y) = \begin{cases} \,1 &\mbox{if } x<y\\ 
\,0 & \mbox{otherwise} \end{cases} &&\mbox{the \textit{chain function} of } P\\
&\mu = \zeta^{\minus1} &&\mbox{the \textit{M\"obius function} of } P
\end{align*}
The latter has the following properties, resulting from these definitions

\begin{equation}\label{eq:mu}
\left.\begin{aligned}
&\mu(x,x)=1 && \quad\qquad\mbox{for all } x\in P\:\\
&\mu(x,y)=\minus \Sigma_{x<z\leq y}\:\mu(z,y)=\minus \Sigma_{x\leq z<y}\:\mu(x,z) && \quad\qquad\mbox{for }x<y\:\\
&\Sigma_{x\leq z\leq y}\:\mu(x,z)=0=\Sigma_{x\leq z\leq y}\:\mu(z,y) && \quad\qquad\mbox{for }x\neq y\:
\end{aligned}
\right\}
\end{equation}

Many combinatorial aspects of the poset $\langle P,\leq\rangle$ are reflected in the ring $W_{P,A}$. For instance, the number of elements of the \textit{interval} $[x,y]=\{z\in P\mid x\leq z\leq y\}$, and the number of chains of length $k\geq 0$ from $x$ to $y$ are given by, respectively
\begin{align*}
\zeta^{2}(x,y) &= |[x,y]|\\
\eta^{k}(x,y) &=|\{(z_{0},\cdots,z_{k})\in P^{k+1}\mid x=z_{0}< z_{1}<\cdots<z_{k}=y\}|
\end{align*}
As $<$-chains of length $|P|$ cannot exist, we have $\eta^{|P|}=0$. And since $\zeta=\delta+\eta$ and $\delta=1$, it follows that
\[
\mu=\zeta^{\minus1}=(\delta+\eta)^{\minus1}=\delta-\eta+\eta^{2}-\cdots\pm\eta^{|P|\minus1}
\]

\begin{remark}\label{rem:Hall}
By this formula, $\mu(x,y)$ is equal to (the image under $\mathbb{Z}\to A$ of) the number of even-length chains from $x$ to $y$ in $P$ minus the number of odd-length ones. (Here, $x=y$ is considered to form a $<$-chain of length zero.) This result is due to P. Hall.
\end{remark}

\section{Proof of the Theorem}\label{sec:proof}

Let $G$ be a finite group of order $|G|=t\cdot m$, and let $\boldsymbol{\mathcal{L}(G)}=\{H\mid H\leq G\}$ be its subgroup lattice. It is partially ordered by the inclusion relation $\subseteq$, which we shall denote as $\leq$. Using $A=\mathbb{Z}$ as base ring, we will write $\boldsymbol{\mathcal{W}(G)}$ for the associated incidence algebra $W_{\mathcal{L}(G),\mathbb{Z}}$.

We denote $\boldsymbol{\mu_{G}}$, the M\"obius function of the poset $\mathcal{L}(G)$, simply by $\mu$, and similarly write just $\zeta$ for the zeta function.

We revisit the action of $G$ on $X = \{T\subseteq G\mid |T| = t\}$ discussed in Section \ref{sec:action}. For $K\leq G$, define:
\begin{align*}
\Lambda(K) &= \{T\in X\mid G_{T}=K\}\\
\lambda(K) &= |\Lambda(K)|
\end{align*}

where $\boldsymbol{G_{T}}=\{g\in G\mid g\cdot T=T\}$ is the stabilizer of $T\in X$. If $H=G_{T}$, then $T=HT$ is a union of right cosets of $H$ in $G$, so $|H|$ divides $t$. Putting $d=t/|H|$, the size of the orbit $G\cdot T$ equals $[G:H]=dm$.

A point $gT$ in this orbit has stabilizer $G_{gT}=gHg^{\minus 1}$, a conjugate of $H$. So if $\mathcal{H}$ denotes the conjugacy class of $H$ in $G$, it follows that $\bigcup_{L\in\mathcal{H}}\Lambda(L)$ is a union of orbits of $G$ in $X$, each of length $dm$. As the $\Lambda(L)$ are disjoint, we obtain $\Sigma_{L\in\mathcal{H}}\,\lambda(L)\equiv \,0\:(dm)$.

Hence if $d\mid t$ and $\mathcal{E}$ is any set of subgroups of order $t/d$ of $G$ closed under conjugation, we have:
\begin{equation}\label{eq:alpha}
\Sigma_{H\in\mathcal{E}}\:\lambda(H)\equiv \,0\:(dm)
\end{equation}

Now let $H\leq G$ fix $T\in X$. Then $H\leq G_{T}$, so $|H|$ again divides $t$. With $d:=t/|H|$, $T$ must be a union of $d$ of the $d\cdot m$ right cosets of $H$. As we saw earlier, the number of these unions equals $b_{m}(d)\cdot m$, which is therefore the number of fixed points of $H$ in $X$. It follows that for any $H\leq G$ with $|H|\mid t$
\begin{equation}\label{eq:beta}
b_{m}(t/|H|)\cdot m=\Sigma_{H\leq K\leq G,\, |K|\mid t}\,\lambda(K)
\end{equation}

Indeed, $T$ is a fixed point of $H$ iff $H\leq G_{T}$, and $|G_{T}|$ divides $t$ for every $T\in X$.

Let $F$ be a subgroup of $G$, of order dividing $t$, and consider the collection
\[
\mathcal{F}=\{H\in \mathcal{L}(G)\mid H\,\text{contains a conjugate of}\,F\}.
\]
The remaining computations will take place in the incidence algebra $\mathcal{W}_{\mathcal{F}}:=W_{\mathcal{F},\mathbb{Z}}$ of the poset $\langle\mathcal{F},\leq\rangle$. As $\mathcal{F}$ is \textit{convex} in $\mathcal{L}(G)$, that is, for $H\leq L\leq K$ in $\mathcal{L}(G)$ with $H,K\in\mathcal{F}$ one has $L\in\mathcal{F}$, it is clear that, for $H$ and $K$ in $\mathcal{F}$, $\mu(H,K)$ has the same value whether taken in $\mathcal{W}(G)$ or in $\mathcal{W}_{\mathcal{F}}$. (See also Remark \ref{rem:Hall}.) The same goes for the zeta function, and so we can use the notations $\mu$ and $\zeta$ unambiguously.

In order to apply M\"obius inversion to (\ref{eq:beta}), we introduce two auxiliary functions $\alpha$ and $\beta$ from the ring $\mathcal{W}_{\mathcal{F}}$
\begin{align*}
\alpha(K,L) &= \begin{cases} \,\lambda(K) & \text{if } |K| \text{ divides } t \text{ and } L=G \\
\,0 & \text{otherwise}\end{cases}\\
\beta(K,L) &= \begin{cases} \,b_{m}(t/|K|) & \text{if } |K| \text{ divides } t \text{ and } L=G \\
\,0 & \text{otherwise} \end{cases}
\end{align*}

In terms of these, we have for $H,L\in\mathcal{F}$:
\[
m\cdot\beta(H,L)=\Sigma_{H\leq K\leq L}\,\alpha(K,L).
\]

For in case $L=G$ and $|H|\mid t$ this follows by (\ref{eq:beta}), and the formula is trivial otherwise.

Reformulating it as $m\cdot\beta(H,L)=\Sigma_{H\leq K\leq L}\,\zeta(H,K)\,\alpha(K,L)$, the equation shows that $m\cdot\beta=\zeta*\alpha$ in $\mathcal{W}_{\mathcal{F}}$. Inverting $\zeta$ now gives $\alpha=m\cdot(\mu*\beta)$. Hence, for any $H\in\mathcal{F}$ with $|H|\mid t$:
\[
\lambda(H)=\alpha(H,G)=m\cdot\Sigma_{H\leq K\leq G}\:\mu(H,K)\:\beta(K,G).
\]

By the definition of $\beta$ and formula \eqref{eq:alpha} applied to $\mathcal{E}=\{H\in\mathcal{F}\mid |H|=t/d\}$, one obtains, dividing out $m$
\begin{align*}
\forall_{d\mid t}\,\Sigma_{H\in\mathcal{F},\,|H|=t/d}\,\Sigma_{H\leq K\leq G,\,|K|\mid t}\:\mu(H,K)\:b_{m}(t/|K|)\equiv \,0\:(d).
\end{align*}

Collecting the $K$ that are of equal index over the corresponding $H$ and eliminating $m=|G|/t$ produces
\begin{align*}
\forall_{d\mid t}\,\Sigma_{k\mid d}\,(\Sigma_{H\in\mathcal{F},\,|H|=t/d}\,\Sigma_{H\leq K\leq G,\,[K:H]=k}\:\mu(H,K))\cdot b_{|G|/t}(d/k)\equiv \,0\:(d).
\end{align*}

This last formula is valid for any $t\mid |G|$ for which $f:=|F|\mid t$. So, given any divisor $d$ of $[G:F]$, one can employ the formula with $t=f\cdot d$. In this situation, the $H\in\mathcal{F}$ for which $|H|=t/d$ are just the elements of the conjugacy class $\mathcal{C}$ of $F$, and we arrive at:
\[
\forall_{d\mid [G:F]}\,\Sigma_{k\mid d}\,(\Sigma_{H\in\mathcal{C}}\,\Sigma_{H\leq K\leq G,\,[K:H]=k}\:\mu(H,K))\cdot b_{|G|/(fd)}(d/k)\equiv \,0\:(d).
\]
We are now in a position to apply Corollary \ref{cor:equiv2}, yielding
\[
\forall_{d\mid [G:F]}\,\Sigma_{k\mid d}\,\Sigma_{H\in\mathcal{C}}\,\Sigma_{H\leq K\leq G,\,[K:H]=k}\:\mu(H,K)\equiv \,0\:(d).
\]
And this is precisely the content of the theorem of Hawkes, Isaacs and \"Ozaydin.

\begin{theorem}\label{th:mutheorem}\textnormal{(\cite[Theorem 5.1]{HIO})}
If $G$ is a finite group, $F$ a subgroup, $\mathcal{C}$ the conjugacy class of $F$ in $G$, and $d$ a divisor of $[G:F]$, the following congruence applies
\begin{flalign*}
&&\Sigma_{H\in\mathcal{C}}\,\Sigma_{H\leq K\leq G,\,[K:H]\mid d}\:\mu_{G}(H,K)\equiv \,0 \text{ mod } d &&\qed
\end{flalign*}
\end{theorem}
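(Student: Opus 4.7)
The plan is to exploit the group action of $G$ on the set $X=\{T\subseteq G\mid |T|=t\}$ from Section \ref{sec:action}, with the choice $t=|F|\cdot d$ so that the conjugates of $F$ are exactly the subgroups of order $t/d$ that participate. Writing $m=|G|/t$, the central quantity is $\lambda(K)=|\{T\in X\mid G_{T}=K\}|$, which will be pinned down by two elementary identities.

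The first identity is orbit-theoretic. Since $G_{T}=K$ forces $|K|\mid t$ and makes the orbit of $T$ have size $[G:K]$, each orbit that passes through a subgroup of order $t/d$ has length $dm$, and stabilizers within an orbit are $G$-conjugate. So for any conjugation-closed collection $\mathcal{E}$ of subgroups of $G$ of order $t/d$,
\[
\Sigma_{H\in\mathcal{E}}\,\lambda(H)\equiv 0\:(dm).
\]
The second identity counts fixed points of a subgroup $H$ with $|H|\mid t$ in two ways: the calculation of Section \ref{sec:action} gives $b_{m}(t/|H|)\cdot m$, while partitioning $X^{H}$ by exact stabilizer gives $\Sigma_{H\leq K\leq G,\,|K|\mid t}\,\lambda(K)$.

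Next I would invert the second identity in the incidence algebra $\mathcal{W}_{\mathcal{F}}$ of the poset $\mathcal{F}$ of supergroups of conjugates of $F$ (convex in $\mathcal{L}(G)$, so $\mu$ is unambiguous). Packaging the two sides as functions in $\mathcal{W}_{\mathcal{F}}$ supported at second argument $G$ yields an equation of the form $m\cdot\beta=\zeta*\alpha$; inverting $\zeta$ produces $\lambda(H)=m\cdot\Sigma_{H\leq K\leq G,\,|K|\mid t}\,\mu(H,K)\,b_{m}(t/|K|)$. Plugging this into the orbit congruence with $\mathcal{E}=\mathcal{C}$ (the conjugacy class of $F$), dividing out $m$, and collecting the $K$ by index $k=[K:H]$ give, for the chosen $d$,
\[
\Sigma_{k\mid d}\,h(k)\cdot b_{m}(d/k)\equiv 0\:(d),
\]
where $h(k)=\Sigma_{H\in\mathcal{C}}\,\Sigma_{H\leq K\leq G,\,[K:H]=k}\,\mu(H,K)$.

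The main obstacle, and the point where the machinery of Section \ref{sec:arith} pays off, is removing the $b_{m}$ factor to get the clean conclusion $\Sigma_{k\mid d}\,h(k)\equiv 0\:(d)$ of Theorem \ref{th:mutheorem}. The trick is that the entire derivation above works not only for the given $d$ but for every divisor $d'\mid d$, simply by re-running it with $t=|F|\cdot d'$; moreover $h$ itself depends only on $k$ and not on $d$. This supplies exactly hypothesis (ii) of Corollary \ref{cor:equiv2} for the arithmetic function $h$ (taking $n=d$), and its equivalence with hypothesis (i) delivers the theorem.
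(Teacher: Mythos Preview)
Your proposal is correct and follows essentially the same route as the paper's own proof: the same Wielandt-type action on $t$-subsets, the same orbit congruence \eqref{eq:alpha} and fixed-point count \eqref{eq:beta}, M\"obius inversion in $\mathcal{W}_{\mathcal{F}}$ to express $\lambda(H)$, and finally Corollary~\ref{cor:equiv2} to strip off the $b_{m}$ factor. Your phrasing---fixing $d$ first and then re-running the argument for each $d'\mid d$ with $t=|F|\cdot d'$---is a mild repackaging of the paper's ``the formula is valid for any $t\mid |G|$ with $|F|\mid t$'', but the content is identical.
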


Taking the sum over all conjugacy classes of subgroups of order $h\mid |G|$, and letting $d\mid |G|/h$, one has, as a consequence
\begin{equation}\label{eq:muthm}
\Sigma_{H\leq G,\,|H|=h}\,\Sigma_{H\leq K\leq G,\,[K:H]\mid d}\:\mu(H,K)\equiv \,0 \text{ mod } d
\end{equation}
Loosely speaking, the sum of the $\mu(H,K)$, taken over all $H$ "at level $h$" and all $K$ "at level at most $h\cdot d$", is a multiple of the "height $d$ of the resulting slice" of $G$.

\section{Further observations}\label{sec:further}

Write $\boldsymbol{\psi(n)}=\psi_{G}(n)=|\{H\leq G\mid |H|=n\}|$ for the number of subgroups of order $n$ in $G$.

Taking $d=p$, a prime, $h\in\mathbb{N}$ with $hp\mid |G|$, $H\in \mathcal{L}(G)$ with $|H|=h$, and $H\leq K\leq G$ with $[K:H]\mid p$, we either have $K=H$ and $\mu(H,K)=1$ by properties \eqref{eq:mu}, or $|K|=hp$ and $K$ \textit{covers} $H$ (in the sense that no intermediate subgroups $H<L<K$ exist), so that $\mu(H,K)=\minus1$, again by \eqref{eq:mu}. Hence, according to \eqref{eq:muthm}:
\[
\psi(h)-|\{(H,K)\mid H\leq K\leq G,\: |H|=h \:\text{and}\:[K:H]=p\}|\,\equiv \,0\:(p),
\]
or, putting $t=hp$,
\begin{equation}\label{eq:Sylow}
\forall_{t\mid|G|}\,\forall_{p\mid t,\,p\,prime}\: \psi_{G}(t/p)\,\equiv\,\Sigma_{K\leq G,\,|K|=t}\:\psi_{K}(t/p)\;(p)
\end{equation}

Sylow's theorem follows by an easy induction: if $1<t\mid |G|$ is a power of a prime $p$, and one assumes that $\psi_{K}(t/p)\equiv 1\,(p)$ for all $K\leq G$, the left-hand side of \eqref{eq:Sylow} and all terms on the right-hand side are congruent to 1 mod $p$, so that $\psi_{G}(t)$, being the number of terms on the right, must be $\equiv1$ mod $p$ as well.$\hfill\square$
\\

Note that Frobenius' theorem, by which $\Sigma_{d\mid t}\,\chi_{G}(d)\equiv 0\:(t)$ for a finite group $G$ of order divisible by $t$, follows from Proposition \ref{prop:chi}, with the aid of a little arithmetic. Indeed, $\forall_{t\mid|G|}\,\Sigma_{d\mid t}\,\chi(d)\,b_{|G|/t}(t/d)\equiv 0 \,(t)$ by Proposition \ref{prop:chi}, and Corollary \ref{cor:equiv2} applies.$\hfill\square$
\\

Next, we note that the proof of Proposition \ref{prop:equivalence} goes through for any function $b\in Ar_{n}$ (notation from that proof) which is a unit in $Ar_{n}$ and satisfies $b*\varphi\in R_{n}$, i.e., for any $b$ in the coset $\varphi^{\minus 1}*R_{n}^{*}$ of $R_{n}^{*}$ in $Ar_{n}^{*}$.
This coset equals $\mu^{\minus 1}*R_{n}^{*}=E*R_{n}^{*}$, by (b) of Corollary \ref{cor:equiv1} applied to $f=\varphi$. Thus, for any function $b$ in the coset and any function $h\in Ar_{n}$, there is an equivalence similar to the one in Proposition \ref{prop:equivalence}:
\[
\forall_{t\mid n}\,\Sigma_{d\mid t}\,h(d)\equiv \,0\:(t)
\quad\Leftrightarrow\quad
\forall_{t\mid n}\,\Sigma_{d\mid t}\,h(d)\,b(t/d)\equiv \,0\:(t).
\]
Such $b$ include the inverse of $\varphi$, given by $\varphi^{\minus 1}(a) = \prod_{\,p\mid a,\,p\,\text{prime}}\,(1\minus p)$ for $a\in\mathbb{N}$, and the function $\sigma(a)=\Sigma_{d\mid a}\,d$. For $\sigma=E*I$ and $I\in R_{n}^{*}$. Generally, putting $I_{k}(a):=a^{k}$ and $\sigma_{k}:=E*I_{k}$, so that $\sigma_{k}(a)$ is the sum of the $k$-th powers of the divisors of $a$, the equivalence holds for $b=\sigma_{k}$. Also, for $u\in\mathbb{Z}/n$ it is easily seen that $\Sigma_{d\mid n}\,u^d\cdot\varphi(n/d)\equiv \,0\:(n)$, so that the function $b:a\mapsto u^a$ is in the coset when $u\in(\mathbb{Z}/n)^{*}$.\\

We conclude by taking a look at the following elements $\boldsymbol{i}, \boldsymbol{\varphi}$ and $\boldsymbol{\gamma}$ of the incidence algebra $\mathcal{W}(G)$:
\begin{align*}
&i(H,K) = \begin{cases} \,[K:H] &\mbox{for } H\leq K\\ 
\,0 & \mbox{otherwise} \end{cases} &&\mbox{the \textit{index function} for } G\\
&\varphi = i*\mu &&\textit{Euler totient function} \mbox{ for } G\\
&\gamma(H,K) = |\{g\in K\mid K=\langle H,g\rangle\}|&&\mbox{number of }\textit{single generators} \mbox{ of } K \mbox{ over } H
\end{align*}

Then $i=\varphi\,*\,\zeta$, so for subgroups $H\leq L\leq G$ we have $[L:H]=\Sigma_{H\leq K\leq L}\,\varphi(H,K)$, and hence
\begin{equation}\label{eq:phi}
|L|=|H|\cdot\Sigma_{H\leq K\leq L}\,\varphi(H,K)
\end{equation}

It follows that for any $H,K\in\mathcal{L}(G)$:
\begin{equation}\label{eq:gamma}
\gamma(H,K)=|H|\cdot\varphi(H,K)
\end{equation}

For both sides vanish when $H\nleq K$. We use induction on $[K:H]$ for the case $H\leq K$. If $[K:H]=1$, both sides are equal to $|H|$. And assuming the equation holds for all $K$ with $H\leq K<L$, by (\ref{eq:phi}) we find:
\[
|L|\,=\,|H|\cdot\varphi(H,L)+\Sigma_{H\leq K<L}\,\gamma(H,K)\,=\,|H|\cdot\varphi(H,L)+|\{g\in L\mid \langle H,g\rangle<L\}|.
\]
But $|L|\,=\,|\{g\in L\mid \langle H,g\rangle\leq L\}|$, and therefore $|H|\cdot\varphi(H,L)\,=\,|\{g\in L\mid \langle H,g\rangle=L\}|\,=\,\gamma(H,L).$\\

The theorem of Frobenius mentioned above is the case $F=1$ of:

\begin{theorem}\label{th:Frobenius} If $G$ is a finite group, $F\leq G$, $\mathcal{C}$ the conjugacy class of $F$ in $G$, and $n$ divides $|G|$, one has
\[
\Sigma_{H\in\mathcal{C}}\,\Sigma_{H\leq K\leq G,\,|K|\mid n}\:\gamma(H,K)\equiv\,0\text{ mod } n.
\]
That is, $n$ divides $|\mathcal{{C}}|$ times the number of $g\in G$ for which $|\langle F,g\rangle|\mid n$.
\end{theorem}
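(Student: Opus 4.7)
The plan is to reduce the statement to Theorem \ref{th:mutheorem} via the identity $\gamma(H,K) = |H|\cdot\varphi(H,K)$ from equation \eqref{eq:gamma}. Since $\varphi = i*\mu$, expanding the convolution and absorbing the factor $|H|$ into $[L:H]$ gives
\[
\gamma(H,K) \;=\; \Sigma_{H\leq L\leq K}\,|H|\cdot[L:H]\cdot\mu(L,K) \;=\; \Sigma_{H\leq L\leq K}\,|L|\cdot\mu(L,K).
\]
The crucial observation is that the summand $|L|\,\mu(L,K)$ no longer depends on $H$; the subgroup $H$ only constrains the lower end of the $L$-range.

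Substituting this into the target sum and swapping the order of summation to move $L$ outside yields
\[
\Sigma_{H\in\mathcal{C}}\,\Sigma_{H\leq K\leq G,\,|K|\mid n}\,\gamma(H,K) \;=\; \Sigma_{L\leq G,\,|L|\mid n}\,|L|\cdot N(L)\cdot M(L),
\]
where $N(L):=|\{H\in\mathcal{C}\mid H\leq L\}|$ and $M(L):=\Sigma_{L\leq K\leq G,\,|K|\mid n}\mu(L,K)$ (the restriction $|L|\mid n$ is automatic, since otherwise $M(L)$ is the empty sum). I would then group the $L$'s by their $G$-conjugacy class $\mathcal{D}$. Since conjugation by an element of $G$ acts as an order-preserving automorphism of $\mathcal{L}(G)$ that preserves both $\mathcal{C}$ and the values of $\mu$, both $N(L)$ and $M(L)$ are constant as $L$ ranges over $\mathcal{D}$. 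Writing $\ell=|L|$ and $N_{\mathcal{D}}$ for these common values, the contribution of $\mathcal{D}$ becomes $\ell\cdot N_{\mathcal{D}}\cdot \Sigma_{L\in\mathcal{D}}M(L)$.

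To finish, I apply Theorem \ref{th:mutheorem} to a representative $L_{0}$ of $\mathcal{D}$ with divisor $d:=n/\ell$ of $[G:L_{0}]$ (valid since $\ell\mid n\mid|G|$); using $[K:L_{0}]\mid n/\ell \Leftrightarrow |K|\mid n$, this gives $\Sigma_{L\in\mathcal{D}}M(L)\equiv 0 \bmod n/\ell$. Multiplying by $\ell$ makes each $\mathcal{D}$-contribution divisible by $n$, and summing over all classes $\mathcal{D}$ closes the argument. The main obstacle is really careful bookkeeping rather than any genuine difficulty: one must verify that $N(L)$ and $M(L)$ are truly conjugation-invariant and translate the index conditions consistently. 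The elegance of the approach rests entirely on the identity $\gamma(H,K)=\Sigma_{H\leq L\leq K}|L|\,\mu(L,K)$, which collapses the $H$-dependence inside the summand and thereby enables the Fubini-plus-conjugacy-class manoeuvre that delivers divisibility by $\ell\cdot(n/\ell)=n$.
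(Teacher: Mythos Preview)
Your proposal is correct and follows essentially the same route as the paper: expand $\gamma(H,K)$ via \eqref{eq:gamma} and $\varphi=i*\mu$ to obtain $\Sigma_{H\leq L\leq K}|L|\,\mu(L,K)$, interchange the order of summation, and then invoke Theorem~\ref{th:mutheorem} on each $G$-conjugacy class of the intermediate subgroups $L$ with divisor $d=n/|L|$ to get divisibility by $n$. The only difference is cosmetic---you spell out the functions $N(L)$ and $M(L)$ and the conjugacy-class grouping explicitly, whereas the paper compresses this into the single remark that ``for a fixed value of $|L|$, the $L$ appearing \ldots\ form a family closed under conjugation''; your extra observation that $M(L)$ is itself conjugation-invariant is true but not needed, since Theorem~\ref{th:mutheorem} already controls the \emph{sum} $\Sigma_{L\in\mathcal{D}}M(L)$.
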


\begin{proof}
Using (\ref{eq:gamma}) and the definition of $\varphi$,
\begin{align*}
&\Sigma_{H\in\mathcal{C}}\,\Sigma_{K\geq H,\,|K|\mid n}\,\gamma(H,K)=\\
&|F|\cdot\Sigma_{H\in\mathcal{C}}\,\Sigma_{K\geq H,\,|K|\mid n}\,\varphi(H,K)=\\
&|F|\cdot\Sigma_{H\in\mathcal{C}}\,\Sigma_{K\geq H,\,|K|\mid n}\,\Sigma_{H\leq L\leq K}\,[L:H]\cdot\mu(L,K)=\\
&\Sigma_{H\in\mathcal{C}}\,\Sigma_{L\geq H,\,|L|\mid n}\,|L|\cdot(\Sigma_{K\geq L,\,|K|\mid n}\,\mu(L,K)).
\end{align*}

For a fixed value of $|L|$, the $L$ appearing in the final expression form a family closed under conjugation, so by Theorem \ref{th:mutheorem} the corresponding terms add up to a multiple of $|L|\cdot(n/|L|)=n$.
\end{proof}
The number of $n$-tupels $(g_{1},\cdots,g_{n})\in G^{n}$ for which $G=\langle g_{1},\cdots,g_{n}\rangle$ is traditionally denoted $\varphi_{G}(n)$ (\cite{GAS}). In our notation it is $\gamma^{n}(1,G)$. We use the symbol $\varphi$ for a related but different notion. Writing $\boldsymbol{\varphi(K)}$ for $\varphi(1,K)$, equation (\ref{eq:phi}), applied to $H=1$ and $L=G$, gives $|G|=\Sigma_{K\leq G}\,\varphi(K)$, generalizing the household formula $\Sigma_{d\mid n}\,\varphi(d)=n$ (which is the special case $G=\mathbb{Z}/n$). When $K$ is cyclic, one has $\varphi(K)=\varphi(|K|)$, the ordinary Euler $\varphi$-function applied to the group order $|K|$, and $\varphi(K)=0$ otherwise.
\\

The smallest number of generators of a group $L$ over a subgroup $H$ is the least $n$ such that $\varphi^n(H,L)>0$, as follows by induction from \eqref{eq:gamma}, using the fact that, again by \eqref{eq:gamma}, $\varphi^n(H,L)$ is always non-negative.\\

The inverse of the index function $i$ in $\mathcal{W}(G)$ is given by
\begin{equation}\label{eq:coindex}
i^{\minus 1}(H,L)=i(H,L)\,\mu(H,L)
\end{equation}
For we have $\Sigma_{H\leq K\leq L}\,i(H,K)\cdot i(K,L)\,\mu(K,L)=[L:H]\cdot\Sigma_{H\leq K\leq L}\,\mu(K,L)=[L:H]\cdot\delta(H,L)$ by \eqref{eq:mu}, and that simply equals $\delta(H,L)$.\\

From \eqref{eq:coindex} and the definition of $\varphi$ one obtains:
\begin{equation}\label{eq:phiinv}
\varphi^{\minus 1}(H,L)=(\zeta*i^{\minus 1})(H,L)=\Sigma_{H\leq K\leq L}\,[L:K]\,\mu(K,L)
\end{equation}
It is an open problem, raised by Kenneth Brown (\cite{BCP}), whether $\varphi^{\minus 1}(1,G)$ can ever be zero for a finite group $G$.
Gasch\"utz (\cite{GAS}) has shown that for solvable $G$ it cannot, and Brown has derived some interesting divisibility properties of $\varphi^{\minus 1}(1,G)$ for general $G$. Whether or not $\varphi^{\minus 1}(H,G)$ can be zero for $H\in\mathcal{L}(G)$ in general is also unknown.\\

Note that, for $G\neq1$, one has $0=\delta(1,G)=\Sigma_{1\leq H\leq G}\,\varphi(1,H)\,\varphi^{\minus 1}(H,G)=\Sigma_{H\leq G,\,H\, \text{cyclic}}\,\varphi(|H|)\,\varphi^{\minus 1}(H,G)$ $=\Sigma_{g\in G}\,\varphi^{\minus 1}(\langle g\rangle,G)$. So, writing $\boldsymbol{\varphi_{\minus 1}}$ for the function $G\to \mathbb{Z},\,g\mapsto\varphi^{\minus 1}(\langle g\rangle,G)$, it follows that:
\begin{equation}\label{eq:phiinv0}
|G|>1\,\,\Rightarrow\,\,\Sigma_{g\in G}\,\varphi_{\minus 1}(g)\,=\,0
\end{equation}
\begin{proposition}\label{prop:phiinv}
The class function $\varphi_{\minus 1}$ is an integral linear combination of irreducible characters of G.
\end{proposition}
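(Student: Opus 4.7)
The plan is to exhibit $\varphi_{-1}$ as an explicit $\mathbb{Z}$-linear combination of permutation characters of $G$. Since every permutation character is the character of a genuine representation, such an expression immediately shows that $\varphi_{-1}$ lies in the ring of virtual characters $\mathbb{Z}[\mathrm{Irr}(G)]$, as required.

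First I would verify that $\varphi_{-1}$ is a class function: conjugate elements yield conjugate cyclic subgroups, and both $[G:K]$ and $\mu(K,G)$ are $G$-conjugation invariants of $K$, so by \eqref{eq:phiinv} the value depends only on the $G$-conjugacy class of $g$. Rewriting the condition $\langle g\rangle\leq K$ as $g\in K$, I would then group the sum
\[
\varphi_{-1}(g)\,=\,\sum_{K\leq G,\,g\in K}[G:K]\,\mu(K,G)
\]
by the $G$-conjugacy class $[K_0]$ of $K$. The coefficients $[G:K]$ and $\mu(K,G)$ are constant on such a class, so the contribution of the class reduces to $[G:K_0]\,\mu(K_0,G)\cdot\#\{K\in[K_0]:g\in K\}$.

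The pivotal combinatorial step is the identity
\[
\#\{K\in[K_0]:g\in K\}\,=\,\frac{\pi_{K_0}(g)}{[N_G(K_0):K_0]},
\]
where $\pi_{K_0}$ denotes the permutation character of the $G$-action on the coset space $G/K_0$. I would establish this by parametrizing the conjugates of $K_0$ by cosets of $N_G(K_0)$ and counting $|\{x\in G:x^{-1}gx\in K_0\}|$ in two ways: on the one hand it equals $|N_G(K_0)|$ times the number of conjugates of $K_0$ containing $g$, and on the other it equals $|K_0|\,\pi_{K_0}(g)$ by the standard coset-fixed-point description of $\pi_{K_0}$. Using $[G:K_0]/[N_G(K_0):K_0]=[G:N_G(K_0)]=|[K_0]|$, this yields
\[
\varphi_{-1}\,=\,\sum_{[K_0]}|[K_0]|\,\mu(K_0,G)\,\pi_{K_0},
\]
the sum running over one representative per $G$-conjugacy class of subgroups.

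Since each orbit size $|[K_0]|$ and each value $\mu(K_0,G)$ is an integer, and each $\pi_{K_0}$ decomposes into irreducibles of $G$ with nonnegative integer multiplicities, assembling these decompositions delivers $\varphi_{-1}$ as an integral linear combination of irreducible characters. The only real obstacle is the double-counting identity that converts the count of conjugates containing $g$ into a permutation character value; the remaining manipulations are bookkeeping.
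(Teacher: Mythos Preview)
Your argument is correct. The double-counting identity
\[
|K_0|\cdot\pi_{K_0}(g)\;=\;|\{x\in G:x^{-1}gx\in K_0\}|\;=\;|N_G(K_0)|\cdot\#\{K\in[K_0]:g\in K\}
\]
is exactly as you describe, and once it is in hand the rest is bookkeeping. Your final expression can even be written more cleanly as $\varphi_{-1}=\sum_{K\leq G}\mu(K,G)\,\pi_{K}$, summing over \emph{all} subgroups rather than one per class.

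The paper takes a different route. It realises $\varphi_{-1}(g)$ as $\sum_{\kappa\in\mathcal{K}^{g}}\mu_{g}(\kappa,G)$, where $\mathcal{K}$ is the coset poset of $G$ (with $G$ adjoined), acting via left translation, and $\mu_{g}$ is the M\"obius function of the fixed-point subposet $\mathcal{K}^{g}$. The identification uses Bouc's observation that $[G:H]\,\mu_{G}(H,G)=\sum_{\kappa\in H\backslash G}\mu_{\mathcal{K}}(\kappa,G)$ together with convexity of $\mathcal{K}^{g}$ in $\mathcal{K}$. The conclusion is then imported from \cite[Lemma~2.8]{HIO}, a general statement that for a $G$-poset with $G$-fixed maximum the function $g\mapsto\sum_{\kappa\in P^{g}}\mu(\kappa,\hat{1})$ is a difference of permutation characters.

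Your approach is more elementary and entirely self-contained: it avoids the coset poset, the convexity argument, and the appeal to an external lemma, and it produces an explicit $\mathbb{Z}$-linear combination with visible coefficients. The paper's approach, on the other hand, situates $\varphi_{-1}$ in the broader framework of equivariant Euler characteristics of $G$-posets, which is the natural setting for Brown's question and for generalisations to other lattice invariants.
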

\begin{proof}
Let $\mathcal{K}=\{Hx\mid H\leq G,\,x\in G\}$ be the \textit{coset poset} of $G$, ordered by inclusion. (Usually, $G$ itself is not considered to be an element of the coset poset, but we include it here.) Every right coset $Hx=x(x^{\minus 1}Hx)$ is equally a left coset, and hence $\mathcal{K}$ becomes a $G$-set by putting $g\cdot (Hx):=gHx$ for $g\in G$. The action of $G$ is compatible with the ordering of $\mathcal{K}$. The fixed points of $g$ are the $Hx$ with $gHx=Hx$, that is, the right cosets of the subgroups $H$ of $G$ that contain the element $g$. We claim that:
\[
\varphi_{\minus 1}(g)=\Sigma_{H\leq G,\,g\in H}\,[G:H]\,\mu_{G}(H,G)=\Sigma_{\kappa\in\mathcal{K}^{g}}\,\mu_{g}(\kappa,G),
\]
where $\mathcal{K}^{g}$ denotes the set of fixed points of $g$ in $\mathcal{K}$ and $\mu_{g}$ is the M\"obius function of the poset $\mathcal{K}^{g}$. For if $H\backslash G$ is the set of right cosets of $H$ in $G$, one has $[G:H]\,\mu_{G}(H,G)=\Sigma_{\kappa\in H\backslash G}\,\mu_{\mathcal{K}}(\kappa,G)$, as noted by S. Bouc (cf. \cite[Section 3]{BCP}), and $g\in H$ iff all elements of $H\backslash G$ are in $\mathcal{K}^{g}$, and $\mu_{\mathcal{K}}(\kappa,G)=\mu_{g}(\kappa,G)$ for all $\kappa\in\mathcal{K}^{g}$ because $\mathcal{K}^{g}$ is a convex subset of $\mathcal{K}$.\\
Lemma 2.8 of \cite{HIO}, applied to the dual poset $\langle\mathcal{K},\supseteq\rangle$ of $\langle\mathcal{K},\subseteq\rangle$ (which has the "same" M\"obius function), now shows that the function $\varphi_{\minus 1}$ is a difference of permutation characters of $G$.
\end{proof}
By \eqref{eq:phiinv0}, the coefficient of the trivial character in the decomposition of $\varphi_{\minus 1}$ into irreducible characters is zero for non-trivial $G$.\\
\begin{proposition}\label{prop:invphi}
For $F\leq K\leq G$, the following formula holds:
\[
\varphi^{\minus 1}(K,G)\,=\,
\Sigma_{F\leq H\leq G,\,\langle H,K\rangle=G}\,\varphi^{\minus 1}(F,H)\,[G:H].
\]
\end{proposition}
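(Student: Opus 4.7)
The plan is to substitute the explicit formula from \eqref{eq:phiinv} for $\varphi^{\minus 1}(F,H)$ into the right-hand side, swap the order of the resulting double sum, and evaluate the inner sum via a Weisner-type M\"obius identity controlled by the join condition $\langle H,K\rangle=G$. After this reduction, the RHS will collapse to $\varphi^{\minus 1}(K,G)$ by a second appeal to \eqref{eq:phiinv}.

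Concretely, writing $\varphi^{\minus 1}(F,H)=\Sigma_{F\leq L\leq H}\,[H:L]\,\mu(L,H)$ and using $[H:L]\cdot[G:H]=[G:L]$, the right-hand side rearranges as
\[
\Sigma_{F\leq L\leq G}\,[G:L]\cdot S(L),\qquad\text{where}\qquad S(L)\,=\,\Sigma_{L\leq H\leq G,\,\langle H,K\rangle=G}\,\mu(L,H).
\]
The heart of the argument is the claim that $S(L)=\mu(L,G)$ when $K\leq L$, and $S(L)=0$ otherwise. The first case is immediate: for $H\geq L\geq K$ one has $\langle H,K\rangle=H$, so only $H=G$ contributes. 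The second case is the main obstacle and amounts to Weisner's theorem in the interval $[L,G]$: grouping the terms of $S(L)$ by the value $M=\langle H,K\rangle=H\vee K$ and setting
\[
g(M)\,=\,\Sigma_{H\in[L,G],\,H\vee K=M}\,\mu(L,H)\qquad\text{for } M\in[L\vee K,G],
\]
one checks that for every $N\in[L\vee K,G]$ the partial sum $\Sigma_{L\vee K\leq M\leq N}\,g(M)$ equals $\Sigma_{L\leq H\leq N}\,\mu(L,H)=\delta(L,N)$, which vanishes because $L<L\vee K\leq N$ under the assumption $K\nleq L$. M\"obius inversion in $[L\vee K,G]$ then forces $g\equiv 0$, whence $S(L)=g(G)=0$.

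Assembling the pieces, since the hypothesis $F\leq K$ makes the joint condition $F\leq L$ and $K\leq L$ equivalent to just $K\leq L$, one obtains
\[
\text{RHS}\,=\,\Sigma_{K\leq L\leq G}\,[G:L]\,\mu(L,G)\,=\,\varphi^{\minus 1}(K,G)
\]
by \eqref{eq:phiinv} again. The step requiring the most care is the Weisner-type vanishing of $S(L)$ when $K\nleq L$, but it reduces cleanly to a single M\"obius inversion once the sum is stratified by $H\vee K$; the remainder is bookkeeping.
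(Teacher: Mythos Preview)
Your argument is correct and takes a genuinely different route from the paper's proof. The paper proceeds by induction on $[G:K]$: using the identity $(\varphi^{\minus 1}*i)(F,K)=\zeta(F,K)=1$ (equation \eqref{eq:invphi}), it splits the full sum $\Sigma_{F\leq H\leq G}\,\varphi^{\minus 1}(F,H)\,[G:H]$ according to the value of $L=\langle H,K\rangle$, handles the pieces with $L<G$ by the inductive hypothesis (applied inside $\mathcal{L}(L)$), and then solves for the $L=G$ piece. Your approach instead unfolds $\varphi^{\minus 1}(F,H)$ via \eqref{eq:phiinv}, swaps the order of summation, and reduces everything to a single lattice identity --- namely Weisner's theorem in the interval $[L,G]$ (with join-partner $L\vee K>L$), which you re-derive inline by M\"obius inversion over $[L\vee K,G]$. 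The upshot: your proof is induction-free and makes the combinatorial content explicit (the whole statement is, at bottom, Weisner's theorem plus \eqref{eq:phiinv} applied twice), while the paper's proof stays entirely within the convolution formalism of $\mathcal{W}(G)$ and avoids any appeal to, or reproof of, the Weisner vanishing. Both are short; yours is arguably more transparent as to \emph{why} the identity holds.
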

\begin{proof}
First, we note that for $F\leq K\leq G$ one has
\begin{equation}\label{eq:invphi}
1=\zeta(F,K)=(\zeta*i^{\minus 1}*i)(F,K)=
(\varphi^{\minus 1}*i)(F,K)=
\Sigma_{F\leq H\leq K}\,\varphi^{\minus 1}(F,H)\,[K:H]
\end{equation}
We now prove the formula by induction on $[G:K]$. If $K=G$, its left-hand side is $\varphi^{\minus1}(G,G)=1$, and the right-hand side is the sum $\Sigma_{F\leq H\leq K}\,\varphi^{\minus 1}(F,H)\,[K:H]$, which in view of (\ref{eq:invphi}) also equals 1. Thus we may assume that $K<G$.\\
Using (\ref{eq:invphi}) plus the fact that for every sandwich $F\leq H\leq G$, the join $\langle H,K\rangle$ of $H$ and $K$ in the lattice $\mathcal{L}(G)$ is either $K$ itself or a subgroup $L>K$ of $G$, with either $L<G$ or $L=G$, we obtain
\begin{align*}
\,&\Sigma_{F\leq H\leq G,\,\langle H,K\rangle=G}\,\varphi^{\minus 1}(F,H)\,[G:H]=\\
\,&1-\Sigma_{F\leq H\leq K}\,\varphi^{\minus 1}(F,H)\,[G:H]
-\Sigma_{K<L<G}\,\Sigma_{F\leq H\leq L,\,\langle H,K\rangle=L}\,\varphi^{\minus 1}(F,H)\,[G:H]=\\
\,&1-[G:K]-\Sigma_{K<L<G}\,\varphi^{\minus 1}(K,L)\,[G:L].
\end{align*}
As to the second equality, the second terms on the last two lines agree by (\ref{eq:invphi}).
By the induction hypothesis, the third terms agree as well, seeing as, for $L\leq G$, the restrictions to $\mathcal{L}(L)\times\mathcal{L}(L)$ of the index and M\"obius functions on $\mathcal{L}(G)\times\mathcal{L}(G)$ are just the corresponding elements of $\mathcal{W}(L)$, so that the same goes for $\varphi=i*\mu$ and $\varphi^{\minus 1}$.\\
Now $\Sigma_{K<L<G}\,\varphi^{\minus 1}(K,L)\,[G:L]=(\varphi^{\minus 1}*i)(K,G)-\varphi^{\minus 1}(K,K)\,[G:K]-\varphi^{\minus 1}(K,G)\,[G:G]$, which is equal to $\zeta(K,G)-[G:K]-\varphi^{\minus 1}(K,G)=1-[G:K]-\varphi^{\minus 1}(K,G)$, and this establishes the result.
\end{proof}
The proposition provides an elegant liaison between the subgroups of $G$ that contain $K$ and the ones that together with $K$ generate $G$, when one applies it with $F=1$:
\[
\Sigma_{K\leq H\leq G}\,\zeta^{\minus 1}(H,G)\,[G:H]\,=\,
\Sigma_{H\leq G,\,\langle H,K\rangle=G}\,\varphi^{\minus 1}(1,H)\,[G:H].
\]

\end{document}